\documentclass[11pt,reqno]{article}

\usepackage{graphicx}
\usepackage{amsmath}
\usepackage{amsthm}
\usepackage{latexsym,bm}
\usepackage{amssymb}
\usepackage{indentfirst}

\usepackage{color,xcolor}

\oddsidemargin0cm \evensidemargin0cm \textwidth16cm
\textheight22cm \topmargin -0.4in \makeatletter

\pagestyle{plain}\pagenumbering{arabic}

\vfuzz2pt 
\hfuzz2pt 
\newtheorem{thm}{Theorem}[section]
\newtheorem{cor}[thm]{Corollary}
\newtheorem{lem}[thm]{Lemma}

\newtheorem{exm}{Example}
\newtheorem{Remark}{Remark}
\numberwithin{equation}{section}


\newcommand{\C}{\mathbb{C}}
\newcommand{\R}{\mathbb{R}}
\newcommand{\Q}{\mathbb{Q}}
\newcommand{\CC}{\mathbb{C}}

\newcommand{\Z}{\mathbb{Z}}
\DeclareMathOperator{\ind}{Ind}
\DeclareMathOperator{\tr}{Tr}
\DeclareMathOperator{\cok}{coker}

\newcommand {\be}{\begin{equation}}
\newcommand {\ee}{\end{equation}}

\newcommand {\divg}{\mathrm{div}\,}
\newcommand {\llag}{\left\langle}
\newcommand {\rrag}{\right\rangle}

\begin{document}

\title{New Bochner type theorems}

\author{Xiaoyang Chen\footnotemark, Fei Han \footnotemark}
\renewcommand{\thefootnote}{\fnsymbol{footnote}}
\footnotetext{School of Mathematical Sciences, Institute for Advanced Study, Tongji University, Shanghai, China. email: xychen100@tongji.edu.cn}
\footnotetext{Department of Mathematics, National University of Singapore, Singapore. email: mathanf@nus.edu.sg }
 \date{}
\maketitle

\begin{abstract}
A classical theorem of Bochner asserts that the isometry group of a  compact Riemannian manifold with negative Ricci curvature is finite.
In this paper we give several extensions of Bochner's theorem by allowing ``small" positive Ricci curvature.

\end{abstract}


\section{Introduction}

A classical theorem of Bochner asserts that the isometry group of a compact Riemannian manifold with negative Ricci curvature is finite  \cite{Boc}.
In this paper we give several extensions of Bochner's theorem by allowing ``small" positive Ricci curvature.
Our first result is the following:

\begin{thm} \label{mm0}
Let $M$ be a compact $2n$-dimensional complex manifold with  nonzero holomorphic Euler number.
Then given a positive number $\lambda_1$, there exists some $\epsilon=\epsilon(n, \lambda_1)>0$ such that
the isometry group of any K\"ahler metric $g$ on $M$ is finite provided that
$$-\lambda_1 \leq Ric(g) \leq \epsilon, \ diam(g) \leq 1.$$

\end{thm}

We also have the following Riemannian analogue of Theorem \ref{mm0} under an additional integral curvature bound.
\begin{thm} \label{mm1}
Let $M$ be a compact $n$-dimensional smooth manifold with nonzero Euler number or nonzero signature.
Then given positive numbers $p, \lambda_1, \lambda_2$ with $p>n/2$, there exists some $\epsilon=\epsilon(p, n, \lambda_1, \lambda_2)>0$ such that
the isometry group of any Riemannian metric $g$ on $M$ is finite provided that:
$$-\lambda_1 \leq Ric(g) \leq \epsilon, diam (g)\leq 1, \ \frac{1}{V(g)}\int_M |\Re_g|^p dV
\leq \lambda_2.$$
\end{thm}

In the spin case, we have the following extension of Bochner's theorem for $4n$ dimensional manifolds involving the topological invariants: elliptic genera. See Section 5 for an Appendix about a brief introduction to elliptic genera.
\begin{thm} \label{mm2}
Let $M$ be a compact $4n$-dimensional spin manifold with nonzero elliptic genera. Then given  positive numbers $p, \lambda_1, \lambda_2$ with $p>2n$,
there exists some $\epsilon=\epsilon(p, n, \lambda_1, \lambda_2)>0$ such that
the isometry group of any Riemannian metric $g$ on $M$ is finite provided that:
$$-\lambda_1 \leq Ric(g) \leq \epsilon, diam (g)\leq 1, \frac{1}{V(g)}\int_M |\Re_g|^p dV
\leq \lambda_2.$$
\end{thm}

In Theorems \ref{mm0}-\ref{mm2}, $Ric(g)$ / $\Re_g$/ $V(g)$/ $diam(g)$ stands for the Ricci curvature/ Riemannian curvature tensor/ volume/ diameter of $g$, respectively.

\begin{Remark}
 The topological assumptions in  Theorems \ref{mm0}-\ref{mm2} are indispensable. For example, the $4n$-dimensional torus $T^{4n}$ has vanishing holomorphic Euler number, Euler number, signature and elliptic genera. However, $T^{4n}$  admits a flat metric with infinite isometry group.
   In fact, the main contribution of this paper is to extend  Bochner's theorem to Riemannian manifolds which may have small positive Ricci curvature
   under certain topological assumptions.
\end{Remark}

\begin{Remark}
There exists a compact $4n$-dimensional spin manifold with infinite isometry group and nonzero
elliptic genera. See Example \ref{va} in Section 5 for details. This shows that the curvature assumption in Theorem \ref{mm2} is necessary.
\end{Remark}

 The famous Atiyah-Hirzebruch vanishing theorem \cite{AH70}  asserts that if a $4n$ dimensional spin manifold $M$ admits a nontrivial smooth circle action, then the $\hat A$-genus of $M$ vanishes. If $M$ is a $K3$ surface, then it is known that $\hat A(M)=2$. This shows that $K3$ surfaces do not admit nontrivial smooth circle actions. Yau conjectured that a simply connected compact Calabi Yau manifold does not admit a smooth circle action. As an application of our Theorem \ref{mm0}, we give a partial confirmation to Yau's conjecture in the following Corollary:
 \begin{cor} \label{cr1}
Let $(M,g)$ be a compact Calabi-Yau manifold with nonzero holomorphic Euler number,
then $g$ or its sufficiently small K\"ahler perturbation does not admit an isometric circle action.
Note that the topological assumption forces that $M$ has finite fundamental group by Cheeger-Gromoll splitting theorem \cite{CG}.
 \end{cor}

Theorem \ref{mm2} shows that a $4n$-dimensional closed spin manifold $(M,g)$ with infinite isometry group must have vanishing elliptic genera under approriate curvature assumptions. To the author's best knowledge, it is the first time in the literature that elliptic genera are shown to vanish under curvature assumptions. This vanishing phenomenon has an interesting application to string geometry. It is known that a spin manifold $M$ has a canonical spin class $\frac{p_1}{2}\in  H^4(M, \Z)$ determined by its spin structure, twice of which is the first Pontryagin class \cite{FH19}. $M$ is called string if the spin class $\frac{p_1}{2}=0$. Let $sec(g)$ be the sectional curvature of $g$. Applying Theorem \ref{mm2}, in \cite{HH21}, it can be shown that:

\begin{cor}\label{riccicoro}Given positive number $\lambda$, there exists some $\varepsilon =\varepsilon(\lambda) > 0$ such that
if a compact 24-dimensional string Rimannian manifold $(M, g)$ satisfies ${\rm diam}(g) \leq 1, {\rm Ric}(g) \leq \varepsilon,$
 $ sec(g) \geq -\lambda$ and has infinite isometry group, then $M$ bounds a string manifold.
\end{cor}
We would like to remark that  Corollary \ref{riccicoro} is related to a famous conjecture of Farrell-Zdravkovska \cite{FZ83} and Yau \cite{Yau93} saying that every almost flat manifold is the boundary of a closed manifold. Davis and Fang \cite{DF16} showed that this conjecture holds under the assumption that the $2$-sylow subgroup of holonomy group is cyclic or generalized quaternionic. The general case of the conjecture remains open. It is also pointed in \cite{DF16} that it is a difficult question that if every almost flat spin manifold (up to changing spin structures) bounds a spin manifold. Corollary \ref{riccicoro} asserts that under weaker curvature condition (weaker than almost flat), in dimension 24 (one of the most important dimensions for string geometry), every string manifold (up to changing string structures) bounds a string manifold.

 $\, $

The geometry and topology of Riemannian manifolds with bounded diameter and certain curvature bound have been
studied extensively. Let us only mention a few closely related to our work. In \cite{KN}, A. Katsuda and T. Nakamura
proved a rigidity theorem for Killing vector fields of a manifold with almost nonpositive Ricci curvature.
 Quantitative versions of Bochner's theorem were obtained in \cite{DSW, Kat, Lim}, more precisely, the authors obtained estimate of
the order of the isometry groups of compact manifolds in terms of some geometric data.

The proof of the results in \cite{DSW, Lim} were based on the collapsing theory of Riemannian manifolds.
Nevertheless Theorems \ref{mm0}-\ref{mm2} will be proved based on Bochner technique and the proof does not involve collapsing theory of Riemannian manifolds.

\par
In Theorem \ref{mm1} and \ref{mm2}, integral curvature bounds are involved. Actually integral curvature bounds have also been significantly used  in various geometric situations, such as the $L^2$-bound of the curvature tensor for noncollapsed manifolds with bounded Ricci curvature, and the $L^4$-bound of the Ricci curvature for the K\"ahler-Ricci flow as well as the real Ricci flow under certain conditions \cite{Ba, BaZ, CN, JN, Si, TiZ}. Recently, V. Kapovitch and J. Lott studied almost Ricci flat manifolds under certain
integral bound of the Riemannian curvature tensor \cite{KL}.
\par

The strategy to prove Theorems \ref{mm0}-\ref{mm2} is by contradiction based on Bochner technique  in a similar fashion.
 Suppose Theorem \ref{mm0} is not true, then given a positive number $\lambda_1$, there is a sequence of K\"ahler metrics $g_i$ on a
 compact complex manifold $M$ with nonzero holomorphic Euler number such that
 the isometry groups of $(M, g_i)$ are infinite and
 $$-\lambda_1 \leq Ric (g_i) \leq \frac{1}{i}, \ diam (g_i) \leq 1.$$
Recall that the holomorphic Euler number of $M$ is equal to $\sum_p (-1)^p dim H^{p,0}(M)$, which is also the index of the Dirac operator $P_i=\bar{\partial_i}+\bar{\partial_i}^*:
\oplus_p \wedge^{2p,0}(M) \rightarrow \oplus_p \wedge^{2p+1,0}(M)$. Then we see that
the index of $P_i$ is nonzero.

\par
The proof will consist of three main steps.

\par
Step 1: In this step the rigidity property of certain elliptic operators will be crucial for us.
We briefly recall the definition of rigidity of an elliptic operator.
 Let $M$ be a closed smooth manifold and
$P$ be an elliptic operator on $M$.
We assume that a compact connected Lie group $G$ acts on $M$ nontrivially and
that $P$ commutes with the $G$-action.
Then the kernel and cokernel of $P$ are finite dimensional representations of $G$.
The equivariant index of $P$ is the character of the virtual representation of $G$ defined by
\begin{equation}
\ind(P, h)=\tr\big[h\big|_{\ker P}\big]-\tr\big[h\big|_{\cok P}\big],\quad h\in G.
\end{equation}
$P$ is said to be \emph{rigid} for the $G$-action if $\ind(P, h)$ does not
depend on $h\in G$.

\par In the setting of Theorem \ref{mm0}, as the isometry group of $(M, g_i)$ is infinite,
there is a nonzero Killing vector field $X_i$ on $M$ generating an isometric $S^1$ action.
The following classical fact is crucial for us.

\begin{thm} \label{ri}
The Dirac operators $P_i=\bar{\partial_i}+\bar{\partial_i}^*$ is rigid for the isometric $S^1$ action.
\end{thm}

\begin{proof}
For each $i$, the Killing vector field $X_i$ is also holomorphic since $g_i$ is a K\"ahler metric on $M$ \cite{M}.
Then  $P_i$ commutes with the isometric $S^1$-action.
 Denote by $c_{p,0} (t)$ the trace of the automorphism of $H^{p,0} (M)$ induced by the map  $f_t =exp (t X_i): M \rightarrow M$.
 By Hodge theory, we see that $H^{p,0} (M)$ is a part of the de Rham cohomology group $H^p(M)$.
 As $f_t$ is homotopic to the identity map,  then $f_t$ induces a trivial action on  $H^{p,0} (M)$. Hence
  the sums  $\sum_p (-1)^p c_{p,0} (t)$ are independent of $t$.
 By Hodge theory, $H^{p,0} (M)$ is isomorphic to the kernel of $P_i$. Since $P_i$ is self dual, we see that
 $P_i=\bar{\partial_i}+\bar{\partial_i}^*$ is rigid for the isometric $S^1$ action.
\end{proof}

By Theorem \ref{ri}, we have
\be \ind P_i= \ind (P_i, 1)=\ind (P_i, \lambda), \ \ \forall \lambda\in S^1.\ee
As the equivariant index $\ind (P_i, \lambda)$  is a Laurent polynomial of $\lambda$ and independent on $\lambda\in S^1$, one must have
\be
\begin{split}
\ind P_i=\ind (P_i, \lambda)=&\dim \left(\ker P_i \cap \Gamma(\oplus_p \wedge^{2p,0}(M))^{S^1}\right)\\
&-\dim \left(\mathrm{coker} P_i \cap \Gamma(\oplus_p \wedge^{2p,0}(M))^{S^1}\right),
\end{split}
 \ee
 where $\Gamma(\oplus_p \wedge^{2p,0}(M))^{S^1}$ consists of smooth sections of $\oplus_p \wedge^{2p,0}(M)$
invariant under the $S^1$-action.
\par
Consider the following Witten deformation of $P_i$:
\be \label{deform} \widetilde{P_i}=P_i + \sqrt{-1} t_i c(X_i),\ee
where $t_i:=(\frac{V(g_i)}{\int_{M} |X_i|^2 dV_i})^{1/2}>0$ and $c(X_i)$ is the Clifford product.
Then $\widetilde{P_i}$ commutes with the isometric $S^1$-action. By restricting to $\Gamma(\oplus_p \wedge^{2p,0}(M))^{S^1}$
and homotopy invariance of index, we have
\be
\begin{split}
\dim \left(\ker P_i \cap \Gamma(\oplus_p \wedge^{2p,0}(M))^{S^1}\right)
-\dim \left(\mathrm{coker} P_i \cap \Gamma(\oplus_p \wedge^{2p,0}(M))^{S^1}\right)
\\
=\dim \left(\ker  \widetilde{P_i} \cap \Gamma(\oplus_p \wedge^{2p,0}(M))^{S^1}\right)
-\dim \left(\mathrm{coker} \widetilde{P_i} \cap \oplus_p \wedge^{2p,0}(M))^{S^1}\right).
\end{split}
 \ee

 Since  $\ind P_i \neq 0$ and  $\widetilde{P_i}$ is self adjoint, we see that there must exist
some $s_i \in \Gamma(\oplus_p \wedge^{2p,0}(M))$ or $\Gamma(\oplus_p \wedge^{2p+1,0}(M))$  such that
\begin{equation} \label{s}
s_i \neq 0
\end{equation}

\begin{equation}
\widetilde{P_i} s_i =0
\end{equation}

\begin{equation}
L_{X_i} s_i=0,
\end{equation}
where $L_{X_i} s_i $ is the Lie derivative of $s_i$ in the direction $X_i$.

\par

We will prove the following crucial inequality in section 2:
\begin{equation} \label{1111}
\int_{M}  t_i^2 |X_i|^2 |s_i|^2 dV_i \leq C(n) \int_{M} t_i |\nabla X_i| |s_i|^2 dV_i,
\end{equation}
where $C(n)$ is some constant depending only on $n$.
\\
\par Step 2: Applying Bochner formula to $X_i, s_i$, we will prove the following mean value inequality in section 3:
\begin{equation} \label{2111}
|X_i|^2_{\infty}=:max_{x \in M}|X_i|^2(x) \leq C_1 \frac{\int_{M} |X_i|^2 dV_i}{V(g_i)},
\end{equation}
\begin{equation} \label{2122}
|s_i|^2_{\infty}=:max_{x \in M}|s_i|^2(x) \leq C_2 \frac{\int_{M} |s_i|^2 dV_i}{V(g_i)},
\end{equation}
where $C_1, C_2$ are positive constants depending on  $n, \lambda_1$.
\begin{Remark}
\par In this step?? the curvature assumption that $ Ric (g_i)\geq -\lambda_1, diam (g_i) \leq 1$ will be used.
\end{Remark}

\par Step 3: As $Ric(g_i) \leq \frac{1}{i}$, applying Bochner formula to $X_i$, we get
\begin{equation} \label{2}
\int_{M} |\nabla X_i|^2 dV_i \leq \frac{1}{i} \int_{M} |X_i|^2 dV_i.
\end{equation}
 Combined with inequalities \ref{1111}, \ref{2111} and \ref{2122}, for sufficiently large $i$, in section 4 we will show that
 $$ \int_{M} |s_i|^2 dV_i \leq  \frac{1}{2}\int_{M} |s_i|^2 dV_i.$$
Hence $s_i \equiv 0$, which contradicts with \ref{s}.

\par To prove Theorem \ref{mm1}, we can consider the operators $d+d^*$ (acting on different Clifford bundles for Euler number and signature) and their Witten deformations. The operator $d+d^*$ is also rigid due to the homotopy equivalence of the de Rham cohomology group $H^* (M, \mathbb{R})$ on which $S^1$ always induces a trivial action.

The rest part of proof is almost identical as Theorem \ref{mm0}, except that in Step 2 we need  an additional integral curvature bound to control the curvature terms in the Bochner formula of these Dirac operators.

\par To prove Theorem \ref{mm2}, we consider the twisted Dirac operators $D_i \otimes B_{k}(T_\C M)$, which appears in the $q$-expansion of the Witten operators of elliptic genera, and their Witten deformations.
 Here $D_i$ is the Atiyah-Singer spin Dirac operator on $M$
 and $B_{k}(T_\C M)$ is an integral linear combination of bundles
 of type $S^{i_1}(T_\CC M)\otimes \cdots \otimes S^{i_r}(T_\CC M)\otimes \Lambda^{j_1}(T_\CC M)\otimes \cdots \otimes\Lambda^{j_s}(T_\CC M)$,
  which are subbundles of tensor products of $T_\CC M$ of power at most $k$. See Section 5 for more details.
The rigidity of these operators is a celebrated conjecture of Witten which was proved in \cite{BT, Liu95, Liu96, Tau}. The rest part of proof is almost identical as Theorem \ref{mm0}, except that in Step 2
we need  an additional integral curvature bound to control the curvature terms in the Bochner formula of these twisted Dirac operators.
\par
A special case of Theorems \ref{mm0}-\ref{mm2} is: let $M$ be a compact manifold satisfying the
topological assumptions in Theorem \ref{mm0}-\ref{mm2}, then the isometry group of any Riemannian metric
$g$ on $M$ with nonpositive Ricci curvature is finite (in Theorem \ref{mm0} we also assume that $g$ is
  K\"ahler). This is actually easy to prove. Otherwise if $(M, g)$ has infinite isometry group, then
 $M$ admits a nowhere vanishing Killing vector field by Bochner's theorem, which implies that all topological invariants in Theorems \ref{mm0}-\ref{mm2} must vanish.
  However, under the much weaker curvature assumptions in
Theorems \ref{mm0}-\ref{mm2}, one will only get a generally nonzero Killing vector field on $M$ which might have zeros.
 It is crucial to use the rigidity of those Dirac operators and a mean value inequality to get around the difficulty.

\section*{Acknowledgements} Xiaoyang Chen is partially supported by National Natural Science Foundation of China No.12171364.
 He thanks Prof. Binglong Chen and Prof. Botong Wang for helpful discussions.  Fei Han is partially supported by the grant AcRF R-146-000-263-114 from National University of Singapore.
He is indebted to Prof. Kefeng Liu and Prof. Weiping Zhang for helpful discussions. Both authors would like to thank the Mathematical Science Research Center at Chongqing University of Technology for hospitality during their visit  and thank Prof. Wilderich Tuschmann for the helpful discussion.

\section{Dirac bundles and an integral formula}
In this section, we briefly review Dirac bundles (page 114 in \cite{LM}) and then prove an integral formula as well as an inequality, which will finish the
 first step in the proof of Theorems \ref{mm0}-\ref{mm2}.
\par
Let $M$ be a compact Riemannian manifold of dimension $m$ and $\nabla^{TM}$ be the Levi-Civita connection. Let $Cl(M)$ be Clifford algebra bundle constructed from the the
tangent bundle $TM$ and the Riemannian metric. $\nabla^{TM}$ induces a connection on $Cl(M)$, which we will still denote by $\nabla^{TM}$. Let $E$ be a complex vector bundle
of left module over $Cl(M)$ (i..e. a vector bundle over $M$ such that at each point $x\in M$, the fiber $E_x$ is a left module over the algebra $Cl(M)_x$). $E$ together with a
Hermitian metric $g^E$ and a  compatible connection $\nabla^E$ is called a  Dirac bundle if \newline
(i) The Clifford multiplication by unit tangent vectors is unitary, i.e., for each $x\in M$,
\be \langle c(e)s_1, c(e)s_2\rangle=\langle s_1, s_2\rangle \ee
for all $s_1, s_2\in E_x$ and  unit vectors $e\in T_x M$; this is equivalent to
\be  \langle c(e)s_1, s_2\rangle+\langle s_1, c(e)s_2\rangle=0\ee
for all  $s_1, s_2\in E_x$ and  unit vectors $e\in T_x M$; \newline
(ii)  The connection $\nabla^E$ is a module derivation, i.e.,
\be \nabla^E(\phi\cdot s)=(\nabla^{TM}\phi)\cdot s+\phi\cdot (\nabla^E s)\ee
for all $\phi\in \Gamma(Cl(M))$ and all $s\in \Gamma(E)$.

The Dirac operator on $E$ is the first-order differential operator $D: \Gamma(E)\to \Gamma(E)$ defined by setting
\be D s=\sum_{j=1}^m c(e_j)\nabla^E_{e_j}s \ee
 where $e_1, e_2, \cdots, e_m$ is a local orthonormal  basis of $T M$.
On $\Gamma(E)$, there is an inner product induced from the pointwise inner product by setting
$$(s_1, s_2)=\int_M \langle s_1, s_2\rangle. $$ The Dirac operator is formally self-adjoint with respect to this inner product, i.e.,
\be (D s_1, s_2)=(s_1, D s_2)\ee
for any sections $s_1, s_2$.

\par
Let $X$ be a tangent vector field on $M$.  Suppose $s \in \Gamma(E)$ satisfies
$$(D+\sqrt{-1}t c(X))s=0$$
for some $ t \in \mathbb{R}$.

Then we have the following integral formula.
\begin{thm} \label{int0}
\be 2\sqrt{-1}\int_M t |X|^2 |s|^2= \int_M - 2\llag\nabla^{E}_X s, s \rrag-\sum_{i=1}^m \langle c(\nabla^{TM}_{e_i}X)s, c(e_i)s \rangle. \ee
\end{thm}
\begin{proof} Let  $\{e_i\}$ be a local orthonormal basis. Define a vector field $U$ by
$$U=\sum_{i=1}^m \langle c(X)s, c(e_i)s \rangle e_i.$$
Then
\be
\begin{split}
&\divg U =\sum_{j=1}^m \left\langle\nabla^{TM}_{e_j}\left( \sum_{i=1}^m \langle c(X)s, c(e_i)s\rangle e_i\right), e_j\right\rangle\\
=&\sum_{i=1}^m \left\langle \nabla^{E}_{e_i}(c(X)s), c(e_i)s \right\rangle+\sum_{i=1}^m \llag c(X)s,\nabla^{E}_{e_i}(c(e_i)s)\rrag +
\sum_{i,j=1}^m \llag c(X)s,c(e_i)s \rrag \llag \nabla^{TM}_{e_j}e_i, e_j \rrag \\
=&\sum_{i=1}^m \left\langle \nabla^{E}_{e_i}(c(X)s), c(e_i)s \right\rangle+\sum_{i=1}^m \llag c(X)s, c(e_i)\nabla^{E}_{e_i}s\rrag
\\
&+\sum_{i=1}^m \llag c(X)s, c(\nabla^{TM}_{e_i}e_i) s\rrag
 -\sum_{i,j=1}^m \llag c(X)s,c(e_i)s \rrag \llag \nabla^{TM}_{e_j}e_j, e_i \rrag \\
=&\sum_{i=1}^m \llag c(\nabla^{TM}_{e_i}X)s +c(X)\nabla^{E}_{e_i}s, c(e_i)s\rrag+\sum_{i=1}^m \llag c(X)s, c(e_i)\nabla^{E}_{e_i}s \rrag\\
=&\sum_{i=1}^m \llag c(\nabla_{e_i}^{TM}X)s, c(e_i)s \rrag+\sum_{i=1}^m \llag c(X)\nabla^{E}_{e_i}s, c(e_i)s \rrag+\llag c(X)s, Ds\rrag\\
=&\sum_{i=1}^m \llag c(\nabla_{e_i}^{TM}X)s, c(e_i)s\rrag-\sum_{i=1}^m \llag c(e_i)c(X)\nabla^{E}_{e_i}s, s\rrag+\llag c(X)s, Ds \rrag\\
=&\sum_{i=1}^m \llag c(\nabla_{e_i}^{TM}X)s, c(e_i)s \rrag+\sum_{i=1}^m \llag (c(X)c(e_i)+2\llag e_i, X\rrag)\nabla^{E}_{e_i}s, s\rrag+\llag c(X)s, Ds \rrag\\
=&\sum_{i=1}^m \llag c(\nabla_{e_i}^{TM}X)s, c(e_i)s \rrag+\llag c(X)D s, s \rrag+2\llag\nabla^{E}_X s, s \rrag+\llag c(X)s, Ds\rrag.\\
\end{split}
\ee

But since $ Ds=-\sqrt{-1}t c(X)s$, we have
\be  \llag c(X)s, Ds\rrag=\sqrt{-1}t |c(X)s|^2=\sqrt{-1}t|X|^2 |s|^2;\ee
\be \llag c(X)Ds, s\rrag=-\sqrt{-1}\llag t c(X)c(X)s, s\rrag=\sqrt{-1}t|X|^2 |s|^2. \ee

The desired formula follows.
\end{proof}

Now we apply the integral formula in  Theorem \ref{int0} to prove inequality \ref{1111}, which will finish the first step of proof
of Theorems \ref{mm0}-\ref{mm2}.
Let $g_i$ be a sequence of Riemannian metrics on $M$ with infinite isometry groups.
Now consider the following Dirac bundles and operators discussed in the introduction.

\par (1) In Theorem \ref{mm0}, we consider the Dirac bundles $E=\oplus_{p} \wedge^{2p,0}(M)$ and Dirac operators
$P_i=\bar{\partial_i}+\bar{\partial_i}^*$.
\par (2) In Theorem \ref{mm1}, we  consider the Dirac bundles $E=\oplus_{p} \wedge^{2p}(M) \otimes \mathbb{C}$ and Dirac operators
$P_i=d+d^*$ or $E$ is the space of self dual differential forms and $P_i$ is the signature operator.
\par (3) In Theorem \ref{mm2}, we  consider the Dirac bundles $E=S(TM) \otimes B_{k}(T_\C M))$ for some $k \leq {[\frac{n}{2}]}$ and Dirac operators
$P_i=D_i \otimes B_{k}(T_\C M)$. Here $D_i$ are the Atiyah-Singer spin Dirac operators on $M$.
 $S(TM)$ is the spinor bundle over a compact $4n$ dimensional spin manifold $M$
and $B_{k}(T_\C M)$ involve linear combinations of tensor product of $T_\C M$ at most to power $k$.
See section 5 for more information.

\par
Let  $X_i$ be a nonzero Killing vector field generating an isometric $S^1$ action on $M$.
Suppose $s_i \in \Gamma(E)$ satisfies
$$(P_i+\sqrt{-1}t_i c(X_i))s=0$$
$$L_{X_i} s_i=0,$$
where $ t_i \in \mathbb{R}$ and $L_{X_i} s_i$ is the Lie derivative of $s_i$ in the direction $X_i$.
Then we have the following crucial inequality.

\begin{thm} \label{int}
\begin{equation} \label{1}
\int_{M}  t_i^2 |X_i|^2 |s_i|^2 dV_i \leq C(n) \int_{M} t_i |\nabla X_i| |s_i|^2 dV_i,
\end{equation}
where $C(n)$ is some constant depending only on $n$.
\end{thm}

\begin{proof}
Theorem \ref{int} is a direct consequence of Theorem \ref{int0} and the following inequality:
\begin{equation} \label{inn}
\llag\nabla_{X_i} s_i, s_i \rrag -\llag L_{X_i} s_i, s_i \rrag \leq C(n) |\nabla X_i| |s_i|^2,
\end{equation}
where  $C(n)$ is some positive constant depending only on $n$.
\\
\par

(1)  When $P_i=\bar{\partial_i}+\bar{\partial_i}^*$ or $P_i=d+d^*$ or the signature operator, inequality \ref{inn} is an easy consequence of
the torsion free property of the Levi-Civita connection $\nabla^{TM}$.

\par

(2) For the Dirac operators $P_i=D_i \otimes B_{k}(T_\C M)$, by (1.24) in  \cite{TZ}, we get
  $$L_{X_i}|_{S(TM)}-\nabla_{X_i} ^{S(TM)}=-\sum_{j,k=1}^{4n}\frac{1}{4}\llag \nabla_{e_j}^{TM} X_i, e_k \rrag c(e_j)c(e_k).$$
 As $\nabla^{TM}$ is torsion free, we have
  $$ L_{X_i} -\nabla^{TM}_{X_i} =-\nabla^{TM} X_i.$$
 Since by Theorem \ref{mod 2}, $B_{k}(T_{\mathbb{C}} M)$ is an integral linear combination of bundles of type
$$S^{i_1}(T_\CC M)\otimes \cdots \otimes S^{i_r}(T_\CC M)\otimes \Lambda^{j_1}(T_\CC M)\otimes \cdots \otimes \Lambda^{j_s}(T_\CC M),$$ which are subbundles of
tensor products of $T_\CC M$ of power at most $k, 0 \leq k \leq [\frac{n}{2}]$,
we see that
$$\llag\nabla_{X_i} s_i, s_i \rrag -\llag L_{X_i} s_i, s_i \rrag \leq C(n) |\nabla X_i| |s_i|^2$$
for some constant $C(n)$ depending only on $n$.

\end{proof}

\section{A mean value inequality}
In this section we prove a mean value inequality which will be used in the proof of Theorem \ref{mm0}-\ref{mm2}.
let $g_i$ be a sequence of Riemannian metrics on $M$ with infinite isometry groups
and $X_i$ a nonzero Killing vector field on $M$.
Consider the Dirac operators $P_i$ discussed in section 2 and their Witten deformations:
\be \label{deform} \widetilde{P_i}=P_i + \sqrt{-1} t_i c(X_i),\ee
where $t_i:=(\frac{V(g_i)}{\int_{M_i} |X_i|^2 dV_i})^{1/2}>0$.
\par
When $P_i=\bar{\partial_i}+\bar{\partial_i}^*$, we assume that $M$ has nonzero holomorphic Euler number and $g_i$ is also K\"ahler and satisfy
$$-\lambda_1 \leq Ric (g_i) \leq \frac{1}{i}, \ diam (g_i) \leq 1.$$
In other cases, we assume that $M$ has nonzero Euler number / signature / elliptic genera and $g_i$ satisfy
  $$-\lambda_1 \leq Ric (g_i) \leq \frac{1}{i}, \ diam (g_i) \leq 1, \frac{1}{V(g_i)}\int_M |\Re_{g_i}|^p dV_i
\leq \lambda_2.$$

 Then there is $s_i \in \Gamma(E)$ satisfying
$$s_i \neq 0$$
$$(P_i+\sqrt{-1}t_i c(X_i))s_i=0$$
$$L_{X_i} s_i=0.$$

Moreover, we have
\begin{thm} \label{good2}
\begin{equation} \label{211}
|X_i|^2_{\infty}=:max_{x \in M}|X_i|^2(x) \leq C_1 \frac{\int_{M} |X_i|^2 dV_i}{V(g_i)},
\end{equation}
\begin{equation} \label{212}
|s_i|^2_{\infty}=:max_{x \in M}|s_i|^2(x) \leq C_2 \frac{\int_{M} |s_i|^2 dV_i}{V(g_i)},
\end{equation}
where $C_1, C_2$ are two constants depending on $n, p,  \lambda_1, \lambda_2$.
\end{thm}

\begin{proof}
Theorem \ref{good2} is in fact a consequence of a general mean value inequality in Theorem \ref{sbl} below.
Since $Ric (g_i) \leq \frac{1}{i}$, applying Bochner formula to $X_i$, we get
\begin{equation} \label{413}
\frac{1}{2}\Delta |X_i|^2=|\nabla X_i|^2 - Ric(g_i)(X_i,X_i)\geq |\nabla X_i|^2 - \frac{1}{i}|X_i|^2,
\end{equation}
where $\Delta$ is the Laplacian acting on functions which is a negative operator.
On the other hand, by Kato's inequality \cite{Br}, we have $|\nabla X_i|\geq |\nabla|X_i||$. It follows that
\begin{equation} \label{414}
|X_i| \Delta |X_i| \geq  -\frac{1}{i}|X_i|^2.
\end{equation}
Since $Ric(g_i) \geq  -\lambda_1, diam (g_i) \leq 1$, applying Theorem \ref{sbl} to $|X_i|$, we get
\begin{equation} \label{17}
|X_i|^2_{\infty}=:max_{x \in M}|X_i|^2(x) \leq C_1 \frac{\int_{M} |X_i|^2 dV_i}{V(g_i)},
\end{equation}
where $C_1$ is some constant depending on $n, \lambda_1$.

\par
To prove the mean value inequality of $s_i$, applying the Bochner formula to $s_i$, we get
\begin{equation} \label{13}
\frac{1}{2}\Delta |s_i|^2 = |\nabla s_i|^2  -  \langle P_i^2 s_i, s_i \rangle  + \langle \Psi_i s_i, s_i \rangle,
\end{equation}
where $\Psi_i $ is a symmetric endomorphism of the Dirac bundles $E$ discussed in section 2.
\par Define a vector field $Y_i$ by the condition
$$\langle Y_i, W \rangle =-\langle P_i s_i, c(W)s_i \rangle.$$
Then by the proof of Proposition 5.3 in pages 114-115, \cite{LM}, we get
$$\langle P_i^2 s_i, s_i \rangle = \langle P_i s_i, P_i s_i \rangle + div Y_i.$$
As $P_i s_i + \sqrt{-1} t_i c(X_i) s_i=0$, then we have
$$\frac{1}{2}\Delta |s_i|^2 = |\nabla s_i|^2 - \langle P_i s_i, P_i s_i \rangle - div Y_i + \langle \Psi_i s_i, s_i \rangle$$
$$=|\nabla s_i|^2 -  |t_i X_i|^2 |s_i|^2  - div Y_i + \langle \Psi_i s_i, s_i \rangle.$$

\par (1) When $P_i=\bar{\partial_i}+\bar{\partial_i}^*$, the curvature term $\langle \Psi_i s_i, s_i \rangle$ can be controlled
by the Ricci curvature, see \cite{M}. Since $Ric(g_i) \geq  -\lambda_1$, we get
$$\langle \Psi_i s_i, s_i \rangle \geq -C(n) \lambda_1 |s_i|^2,$$
where $C(n)$ is some constant  depending only $n$.
For any $x \in M$, by the choice of $t_i$, we have
$$|t_i X_i|^2 (x) \leq t_i^2 |X_i|^2_{\infty} \leq t_i^2 C_1 \frac{\int_{M} |X_i|^2 dV_i}{V(g_i)}=C_1.$$
Hence we get

\begin{equation} \label{18}
\frac{1}{2}\Delta |s_i|^2 \geq |\nabla s_i|^2  - (C_1  + C(n) \lambda_1) |s_i|^2-div Y_i
\end{equation}
By Kato's inequality, we have $|\nabla s_i|\geq |\nabla |s_i||$. It follows that
\begin{equation} \label{414}
|s_i| \Delta |s_i| \geq  - (C_1 +  C(n)\lambda_1) |s_i|^2-div Y_i
\end{equation}
By the definition of $Y_i$, we get
$$|Y_i| \leq t_i |X_i| |s_i|^2 \leq C_1^{\frac{1}{2}} |s_i|^2.$$
Since $Ric(g_i) \geq  -\lambda_1, diam (g_i) \leq 1$, applying Theorem \ref{sbl} to $|s_i|$, we get
$$|s_i|^2_{\infty}=:max_{x \in M}|s_i|^2(x) \leq  C_2 \frac{\int_{M} |s_i|^2 dV_i}{V(g_i)}$$
for some positive constant $C_2$ depending only on  $n,  \lambda_1$.

\par (2)  When $P_i=d+d^*$ or the signature operator or $D_i \otimes B_{k}(T_\C M)$, we need to control the curvature term $\langle \Psi_i s_i, s_i \rangle$
by the full Riemannian curvature tensor $|\Re_{g_i}|$ \cite{LM}. More precisely,  we have
$$\langle \Psi_i s_i, s_i \rangle \geq - C(n) |\Re_{g_i}| |s_i|^2$$
where $C(n)$ is some constant  depending only $n$.

\par

For any $x \in M$, by the choice of $t_i$, we have
$$|t_i X_i|^2 (x) \leq t_i^2 |X_i|^2_{\infty} \leq t_i^2 C_1 \frac{\int_{M} |X_i|^2 dV_i}{V(g_i)}=C_1.$$
Hence we get

\begin{equation} \label{18}
\frac{1}{2}\Delta |s_i|^2 \geq |\nabla s_i|^2  - (C_1 + C(n) |\Re_{g_i}|) |s_i|^2-div Y_i
\end{equation}
By Kato's inequality, we have $|\nabla s_i|\geq |\nabla |s_i||$. It follows that
\begin{equation} \label{414}
|s_i| \Delta |s_i| \geq  - (C_1 +  C(n) |\Re_{g_i}|) |s_i|^2-div Y_i
\end{equation}
By the definition of $Y_i$, we get
$$|Y_i| \leq t_i |X_i| |s_i|^2 \leq C_1^{\frac{1}{2}} |s_i|^2.$$
Since $Ric(g_i) \geq  -\lambda_1, diam (g_i) \leq 1, \frac{1}{V(g_i)}\int_M |\Re_{g_i}|^p dV_i
\leq \lambda_2$, applying Theorem \ref{sbl} to $|s_i|$, we get
$$|s_i|^2_{\infty}=:max_{x \in M}|s_i|^2(x) \leq   C_2 \frac{\int_{M} |s_i|^2 dV_i}{V(g_i)}$$
for some constant $C_2$ depending only on  $n, p,  \lambda_1, \lambda_2$.

\end{proof}

Now we prove a general mean value inequality.
We firstly recall the following  Poincar\'e-Sobolev inequality, see for example Theorem 2,  page 386 and Theorem 3, page 397 in  \cite{Br}.
\begin{thm} \label{sob}
Let $(M,g)$ be a closed $m$-dimensional smooth Riemannian manifold such that for some constant $b>0$,
$$ r_{min}(g) (diam(g))^2 \geq-(m-1)b^2,$$
where $diam (g)$ is the diameter of $g$, $Ric(g)$ is the Ricci curvature of $g$ and
$$r_{min}(g)=inf\{{Ric(g)(u,u): u \in TM, g(u,u)=1}\}.$$
Let $R=\frac{diam(g)}{b C(b)}$, where $C(b)$ is the unique positive root of the equation
$$ x \int_0^{b}(cht + x sht)^{m-1} dt=\int_0^{\pi} sin^{m-1}t dt.$$
Then for each $1 \leq l_1 \leq \frac{ml_2}{m-l_2}, l_1 < \infty$ and $f \in W^{1,l_2}(M)$, we have
$$ \|f-\frac{1}{V(g)}\int_M f dV \|_{l_1} \leq S_{l_1,l_2} \|\nabla f\|_{l_2} $$
$$ \|f\|_{l_1} \leq S_{l_1, l_2} \|\nabla f\|_{l_2} + V(g)^{1/l_1-1/l_2}\|f\|_{l_2},$$
where $V(g)$ is the volume of $(M,g)$, $S_{l_1,l_2}=(V(g)/vol(S^m (1))^{1/l_1-1/l_2} R \Sigma(m, l_1, l_2)$ and $\Sigma(m, l_1, l_2)$ is the Sobolev constant of
the canonical unit sphere $S^m$ defined by
$$\Sigma(m, l_1, l_2)=sup\{{\|f\|_{l_1}/\|\nabla f\|_{l_2}: f \in W^{1,l_2}(S^m), f \neq 0, \int_{S^m}f=0}\}.$$
\end{thm}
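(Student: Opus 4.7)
My plan is to deduce Theorem \ref{sob} from an isoperimetric inequality adapted to the Ricci lower bound. The condition $r_{min}(g)\,(diam(g))^2 \geq -(m-1)b^2$ is the scale-invariant formulation saying that, after rescaling to unit diameter, $Ric(g) \geq -(m-1)b^2$. First I would invoke Bishop-Gromov volume comparison to compare balls in $(M,g)$ with those in the simply connected $m$-dimensional space of constant sectional curvature $-b^2$, yielding bounds on volumes and on areas of geodesic spheres in terms of $b$ and $diam(g)$.

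Second I would apply Gallot's isoperimetric inequality, whose proof proceeds by symmetric rearrangement onto a model space and an ODE comparison of the isoperimetric profile. The constant $C(b)$ arises precisely from this ODE: the equation $x\int_0^b(\cosh t + x\sinh t)^{m-1}\,dt = \int_0^\pi \sin^{m-1}t\,dt$ expresses a volume-matching condition that identifies the round sphere of the correct normalized size to use as comparison. Solving it produces the length scale $R = diam(g)/(b\,C(b))$ that appears in the final Sobolev constant, and makes the isoperimetric profile on $M$ comparable (after rescaling by $R$) to that of the unit sphere $S^m$.

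Third I would pass from isoperimetry to Sobolev via the Federer-Fleming co-area argument. Applying the isoperimetric inequality to level sets $\{|f|>t\}$ of $f$ and integrating over $t$ yields $\|f-\bar f\|_{m/(m-1)} \lesssim \|\nabla f\|_1$; applying this bound to $|f|^\alpha$ for a suitable exponent $\alpha$ and using H\"older's inequality extends the estimate to the full admissible range $1\leq l_1\leq m l_2/(m-l_2)$, with the dimensional factor given precisely by the sphere's Sobolev constant $\Sigma(m,l_1,l_2)$. For the second inequality (without mean subtraction), I decompose $f = (f-\bar f)+\bar f$, apply the first inequality to $f-\bar f$, and estimate the constant piece by $\|\bar f\|_{l_1} = V(g)^{1/l_1}|\bar f| \leq V(g)^{1/l_1-1/l_2}\|f\|_{l_2}$ via H\"older.

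I expect the main obstacle to be producing the constant in the sharp form claimed, namely with the exact factor $\Sigma(m,l_1,l_2)$ of the round sphere and the volumetric prefactor $(V(g)/\mathrm{vol}(S^m(1)))^{1/l_1-1/l_2}$. Gallot's rearrangement-based approach requires careful control of the Jacobian of the exponential map by the comparison ODE defining $C(b)$, and the normalizations must be chosen so that the sphere constant appears exactly, rather than up to a larger dimensional multiple. Once this calibration is set up, the remaining steps are routine manipulations with the co-area formula and H\"older's inequality.
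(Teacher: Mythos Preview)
The paper does not prove Theorem~\ref{sob}; it is quoted verbatim from B\'erard's survey \cite{Br} (Theorem~2, p.~386 and Theorem~3, p.~397), so there is no ``paper's own proof'' to compare against. Your sketch is essentially the argument underlying B\'erard's statement: Gallot's isoperimetric inequality under a Ricci lower bound, obtained by symmetrization onto a model space with the constant $C(b)$ arising from the volume-matching ODE, followed by the standard passage from isoperimetry to Poincar\'e--Sobolev via co-area and H\"older. That is the correct route, and your identification of the calibration of constants as the delicate point is accurate; but for the purposes of this paper the result is simply imported from the literature, and you could do the same.
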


\par
As an application of Theorem \ref{sob}, we get the following mean value inequality which is a generalization of Theorem  3 in \cite{Br}, pages 395-396.
See also \cite{PL} pages 80-84.

\begin{thm}\label{sbl}
Let $(M,g)$ be a closed $m$-dimensional smooth Riemannian manifold such that for some constant $b>0$,
$$r_{min}(g) (diam(g))^2 \geq-(m-1)b^2.$$
If $f\in W^{1,2}(M)$ is a nonnegative continuous functions such that $f \Delta f \geq -h_1 f^2-div Y$ (here $\Delta$ is a negative operator)
in the sense of distribution for some nonnegative continuous function $h_1$ and $Y$ is a $C^1$ vector field satisfying
$$|Y|(x) \leq h_2(x) f^2(x), \forall x\in M$$
for some nonnegative continuous function $h_2$, then
$$max_{x \in M}|f|^2(x) \leq C(m, p, R, \Lambda) \frac{\int_M f^2 dV}{V(g)},$$
where $C(m, p, R, \Lambda)$ is some constant depending only on $m, p, R=\frac{diam(g)}{b C(b)}$ and
$$\Lambda=\frac{\int_M h^p dV}{V(g)}, p > \frac{m}{2}$$
$$h=h_1 +2 h_2^2.$$
 \end{thm}

\begin{proof}
The proof is a standard application of Moser iteration.
For any $k \geq 1$, multiply the inequality $f \Delta f \geq -h_1 f^2 -div Y$ by $f^{2k-2}$ and integrate. Then we get
$$\int_M f^{2k-1} \Delta f \geq \int_M -h_1 f^{2k} - div Y \ f^{2k-2}$$
$$=\int_M -h_1 f^{2k} + \langle Y, \nabla f^{2k-2}\rangle$$
$$= \int_M -h_1 f^{2k} + (2k-2) f^{2k-3} \langle Y, \nabla f \rangle$$
Hence
$$(2k-1) \int_M f^{2k-2} |\nabla f|^2 \leq \int_M h_1 f^{2k} - (2k-2) f^{2k-3} \langle Y, \nabla f \rangle$$
$$\leq \int_M h_1 f^{2k} + (2k-2) f^{2k-1} h_2 |\nabla f| $$
$$\leq  \int_M h_1 f^{2k} + (2k-2)  h_2^2 f^{2k} + \frac{2k-2}{4} f^{2k-2} |\nabla f|^2.$$
Then
$$\frac{3k-1}{2} \int_M f^{2k-2} |\nabla f|^2 \leq \int_M (h_1 + (2k-2)  h_2^2) f^{2k}$$
And
$$\int_M |\nabla f^k|^2 \leq \frac{2k^2}{3k-1}\int_M (h_1 + (2k-2)  h_2^2) f^{2k}$$
$$\leq k^2 \int_M (h_1 + 2 h_2^2) f^{2k}.$$
So
$$ \|\nabla f^k\|_2 \leq  (\int_M k^2 h  f^{2k})^{\frac{1}{2}}.$$

Let $v=\frac{m}{2}$ if $m>2$ and $ 1 < v < p $ be arbitrary for  $m=2$. Let $\mu$ be the conjugate of $v$ such that
$$\frac{1}{v} + \frac{1}{\mu}=1.$$
Applying Theorem \ref{sob} to $f^k$, we get

\begin{equation} \label{new0}
\|f^k\|_{2 \mu} \leq S_{2\mu,2} \|\nabla f^k \|_2 + V(g)^{\frac{1-\mu}{2 \mu}}\|f^k \|_2.
\end{equation}

Let $A=(\int_M h^p)^{\frac{1}{p}}$. Since $p> \frac{m}{2}$, we see that $p >v$ by the choice of $v$. By the H$\ddot{o}$lder inequality, we have
$$k^2 \int_M h f^{2k} \leq k^2 A (\int_M (f^{2k})^{\frac{p}{p-1}})^{\frac{p-1}{p}}$$
\begin{equation} \label{new}
\leq k^2 A (\int_M f^{2k})^{\frac{\mu(p-1)-p}{p(\mu-1)}}  (\int_M  f^{2k \mu})^{\frac{1}{p(\mu-1)}},
\end{equation}

Define $\epsilon, \delta, y$ by
$$\epsilon=\frac{\mu (p-1)-p}{p(\mu-1)}$$
$$(\delta {\epsilon}^{\frac{1}{1-\epsilon}} (\frac{1}{\epsilon}-1))^{\frac{1}{2}}=\frac{1}{2  S_{2\mu, 2}}$$
$$y=(k^2 A)^{\frac{p (\mu-1)}{\mu(p-1)-p}} ( \int_M f^{2k}) (\int_M f^{2k\mu})^{-\frac{1}{\mu}}.$$
Then $0<\epsilon <1$ as $p > v$.
By Young inequality, we get
$$ y^{\epsilon} \leq \delta^{\frac{\epsilon -1}{\epsilon}} y + \delta {\epsilon}^{\frac{1}{1-\epsilon}} (\frac{1}{\epsilon}-1).$$
Hence
$$k^2 A (\int_M f^{2k} )^{\frac{\mu (p-1)-p}{p(\mu-1)}} (\int_M f^{2k \mu})^{\frac{p-\mu (p-1)}{p \mu (\mu-1)}}$$
$$\leq \delta^{\frac{\epsilon-1}{\epsilon}} (k^2 A)^{\frac{p (\mu-1)}{\mu(p-1)-p}} (\int_M f^{2k}) (\int_M f^{2k \mu})^{-\frac{1}{\mu}}+ \delta
{\epsilon}^{\frac{1}{1-\epsilon}} (\frac{1}{\epsilon}-1).$$
Multiplying through by $ (\int_M f^{2k\mu})^{\frac{1}{\mu}}$, combined with (\ref{new}),
we get
$$k^2 \int_M h f^{2k} \leq \delta^{\frac{\epsilon-1}{\epsilon}} (k^2 A)^{\frac{p (\mu-1)}{\mu(p-1)-p}} \int_M f^{2k} + \delta {\epsilon}^{\frac{1}{1-\epsilon}}
(\frac{1}{\epsilon}-1) (\int_M f^{2k\mu})^{\frac{1}{\mu}}. $$
Then
\begin{equation} \label{new2}
(k^2 \int_M h f^{2k})^{\frac{1}{2}} \leq \delta^{\frac{\epsilon-1}{2 \epsilon}} (k^2 A)^{\frac{1}{2} \frac{p (\mu-1)}{\mu(p-1)-p}} (\int_M f^{2k})^{\frac{1}{2}} +
(\delta {\epsilon}^{\frac{1}{1-\epsilon}} (\frac{1}{\epsilon}-1))^{\frac{1}{2}} (\int_M f^{2k\mu})^{\frac{1}{2\mu}}.
\end{equation}

Combined with (\ref{new0}), we get
$$(\int_M f^{2k \mu})^{\frac{1}{2\mu}} \leq S_{2\mu, 2} (k^2 \int_M h f^{2k})^{\frac{1}{2}} +  V(g)^{\frac{1-\mu}{2 \mu}} (\int_M f^{2k})^{\frac{1}{2}}$$
$$\leq S_{2\mu, 2} \ \delta^{\frac{\epsilon-1}{2 \epsilon}} (k^2 A)^{\frac{1}{2} \frac{p (\mu-1)}{\mu(p-1)-p}} (\int_M f^{2k})^{\frac{1}{2}} +
S_{2\mu, 2} \ (\delta {\epsilon}^{\frac{1}{1-\epsilon}} (\frac{1}{\epsilon}-1))^{\frac{1}{2}} (\int_M f^{2k\mu})^{\frac{1}{2\mu}}+ V(g)^{\frac{1-\mu}{2 \mu}} (\int_M
f^{2k})^{\frac{1}{2}}.$$

As $(\delta {\epsilon}^{\frac{1}{1-\epsilon}} (\frac{1}{\epsilon}-1))^{\frac{1}{2}}=\frac{1}{2  S_{2\mu, 2}}$, then
$\delta=C(m,p) (\frac{1}{S_{2\mu, 2}})^2$ for some constant $C(m, p)$ depending only on $m, p$. Moreover, we have
$$(\int_M f^{2k \mu})^{\frac{1}{2\mu}}
\leq  2 S_{2\mu, 2} \ \delta^{\frac{\epsilon-1}{2 \epsilon}} (k^2 A)^{\frac{1}{2} \frac{p (\mu-1)}{\mu(p-1)-p}} (\int_M f^{2k})^{\frac{1}{2}}
+ 2 V(g)^{\frac{1-\mu}{2 \mu}} (\int_M f^{2k})^{\frac{1}{2}}.$$
Then
$$\|f\|_{{2k\mu}}\leq (2 S_{2\mu, 2} \ \delta^{\frac{\epsilon-1}{2 \epsilon}} (k^2 A)^{\frac{1}{2} \frac{p (\mu-1)}{\mu(p-1)-p}}+ 2
V(g)^{\frac{1-\mu}{2 \mu}})^{\frac{1}{k}}\|f\|_{2k}.$$
By the choice of $\epsilon$, we have
$$\frac{\epsilon-1}{2 \epsilon}=\frac{-\mu}{2(\mu(p-1)-p)}.$$
As $S_{2\mu, 2}=C(m, p) V(g)^{\frac{1-\mu}{2\mu}} R$ for some constant $C(m, p)$ depending only on $m, p$, then
$$\|f\|_{{2k\mu}}\leq (C(m,p) (V(g)^{\frac{1-\mu}{2 \mu}} R)^{\frac{p(\mu-1)}{\mu (p-1)-p}} (k^2 A)^{\frac{1}{2} \frac{p (\mu-1)}{\mu(p-1)-p}} + 2
V(g)^{\frac{1-\mu}{2 \mu}})^{\frac{1}{k}}\|f\|_{2k}$$
\begin{equation} \label{new6}
\leq B^{\frac{1}{k}} k^{\frac{1}{k} \frac{p (\mu-1)}{\mu(p-1)-p}} V(g)^{\frac{1-\mu}{2 \mu k }}\|f\|_{2k},
\end{equation}
where $$B=C(m,p) V(g)^{\frac{\mu-1}{2\mu} \frac{-\mu}{\mu(p-1)-p}} R^{\frac{p(\mu-1)}{\mu (p-1)-p}} A^{\frac{1}{2} \frac{p (\mu-1)}{\mu(p-1)-p}} + 2
=C(m,p) \Lambda^{\frac{1}{2} \frac{\mu-1}{\mu(p-1)-p}} R^{\frac{p(\mu-1)}{\mu (p-1)-p}} +2.$$
Let $k={\mu}^i, i=0,1,\cdots$. Since $K_1= \sum i {\mu}^{-i}$ and $K_2=\sum {\mu}^{-i}$  is finite, multiplying (\ref{new6}), we get
$$max_{x \in M}|f|^2(x) \leq C(m, p, R, \Lambda) \frac{\int_M f^2 dV}{V(g)},$$
$$C(m, p, R, \Lambda)=\mu^{2K_1 \frac{p(\mu-1)}{\mu (p-1)-p}} B^{2 K_2}.$$

\end{proof}

\section{Proof of Theorems \ref{mm0}-\ref{mm2}}

Now we finish the proof of Theorems \ref{mm0}-\ref{mm2} by contradiction simultaneously.
To prove Theorem \ref{mm0},
let $g_i$ be a sequence of K\"ahler  metrics on a
 compact complex manifold $M$ with nonzero holomorphic Euler number such that
 the isometry groups of $(M, g_i)$ are infinite and
 $$-\lambda_1 \leq Ric (g_i) \leq \frac{1}{i}, \ diam (g_i) \leq 1.$$
 To prove Theorem \ref{mm1} or \ref{mm2}, then we assume that $M$ has nonzero Euler number /  signature / elliptic genera and
 $g_i$ is a sequence of Riemannian metrics on $M$ with infinite isometry groups and satisfy
  $$-\lambda_1 \leq Ric (g_i) \leq \frac{1}{i}, \ diam (g_i) \leq 1, \frac{1}{V(g_i)}\int_M |\Re_{g_i}|^p dV_i
\leq \lambda_2.$$
Let $X_i$ be a nonzero Killing vector field on $M_i$.
Consider the Dirac operators $P_i$ as discussed in section 2 and their Witten deformations:
\be \label{deform} \widetilde{P_i}=P_i + \sqrt{-1} t_i c(X_i),\ee
where $t_i:=(\frac{V(g_i)}{\int_{M} |X_i|^2 dV_i})^{1/2}>0$.
\par Then there is $s_i \in \Gamma(E)$ satisfying
$$s_i \neq 0$$
$$(P_i+\sqrt{-1}t_i c(X_i))s_i=0$$
$$L_{X_i} s_i=0.$$

\begin{lem} \label{co}
 For sufficiently large $i$, we have
$$ \int_{M} |s_i|^2 dV_i \leq  \frac{1}{2}\int_{M} |s_i|^2 dV_i.$$
\end{lem}

By Lemma \ref{co}, $s_i \equiv 0$ for sufficiently large $i$, which is a contradiction.

\par The proof of Lemma \ref{co} will be based on the following lemmas.
\begin{lem} \label{boc}
\begin{equation}
\int_{M} |\nabla X_i|^2 dV_i \leq \frac{1}{i} \int_{M} |X_i|^2 dV_i.
\end{equation}
\end{lem}

\begin{proof}
As $Ric(g_i) \leq  \frac{1}{i}$, applying Bochner formula to $X_i$ \cite{P}, we get
\begin{equation} \label{4.2}
\frac{1}{2}\Delta |X_i|^2=|\nabla X_i|^2 - Ric(g_i)(X_i,X_i)\geq |\nabla X_i|^2 - \frac{1}{i}|X_i|^2,
\end{equation}
where $\Delta$ is the Laplacian acting on functions which is a negative operator.
Then
\begin{equation} \label{20}
\int_{M} |\nabla X_i|^2 dV_i \leq \frac{1}{i} \int_{M} |X_i|^2 dV_i.
\end{equation}
\end{proof}

\begin{lem} \label{good1}
$$\int_{M} t_i^2 |X_i|^2 |s_i|^2 dV_i
\leq \frac{C(n)}{\sqrt{i}} |s_i|_{\infty} (\int_{M} t_i^2 |X_i|^2 dV_i )^{\frac{1}{2}} (\int_{M}|s_i|^2 dV_i)^{\frac{1}{2}}$$
where $|s_i|_{\infty}=\max_{x \in M}|s_i|(x)$ and
$C(n)$ is some constant depending only on $n$.
\end{lem}

\begin{proof}

By Theorem \ref{int} and Lemma \ref{boc}, we get
$$\int_{M} t_i^2 |X_i|^2 |s_i|^2 dV_i \leq C(n) \int_{M}  t_i|\nabla X_i||s_i|^2 dV_i $$
$$\leq C(n) (\int_{M} t_i^2 |\nabla X_i|^2 dV_i )^{\frac{1}{2}} (\int_{M}|s_i|^4 dV_i)^{\frac{1}{2}} $$
\begin{equation} \label{21}
\leq \frac{C(n)}{\sqrt{i}} |s_i|_{\infty} (\int_{M} t_i^2 |X_i|^2 dV_i )^{\frac{1}{2}} (\int_{M}|s_i|^2 dV_i)^{\frac{1}{2}},
\end{equation}
where $|s_i|_{\infty}=\max_{x \in M}|s_i|(x).$

\end{proof}

\begin{lem} \label{good3}

\begin{equation}
\frac{ \int_{M}  |X_i|^2 dV_i}{V(g_i)} \int_{M} |s_i|^2 dV_i \leq  \int_{M}  |X_i|^2 |s_i|^2 dV_i  +
\frac{ C_1 |s_i|^2_{\infty}   }{\sqrt{i}}  \int_{M} |X_i|^2 dV_i
\end{equation}
for some constant $C_1$ depending on $n, p, \lambda_1, \lambda_2$.
\end{lem}
\begin{proof}
Let $h_i=|X_i|^2$ and $\overline{h_i}= \frac{\int_{M}  |X_i|^2 dV_i }{V(g_i)}$. Since $diam (g_i) \leq 1,  Ric(g_i) \geq -\lambda_1$,
by Theorem \ref{good2}, \ref{sob} and Lemma \ref{boc}, we get
$$\int_{M} |h_i - \overline{h_i}||s_i|^2 dV_i \leq |s_i|^2_{\infty} (\int_{M} |h_i - \overline{h_i}|^2 dV_i)^{\frac{1}{2}} (V(g_i))^{\frac{1}{2}}$$
$$\leq C_1  |s_i|^2_{\infty}  (\int_{M} |\nabla h_i|^2 dV_i)^{\frac{1}{2}} (V(g_i))^{\frac{1}{2}}$$
$$=2 C_1  |s_i|^2_{\infty}   (\int_{M} |X_i|^2 |\nabla |X_i||^2| dV_i)^{\frac{1}{2}} (V(g_i))^{\frac{1}{2}} $$
$$\leq 2 C_1  |s_i|^2_{\infty}  (\int_{M} |X_i|^2 |\nabla X_i|^2 dV_i)^{\frac{1}{2}} (V(g_i))^{\frac{1}{2}} $$
$$\leq  2 C_1 |s_i|^2_{\infty}  |X_i|_{\infty} (V(g_i))^{\frac{1}{2}}  (\int_{M} |\nabla X_i|^2 dV_i)^{\frac{1}{2}} $$
$$\leq \frac{ C_1 |s_i|^2_{\infty}   }{\sqrt{i}}  \int_{M} |X_i|^2 dV_i,$$
where $C_1$ is a positive constant depending on $n, p, \lambda_1, \lambda_2$.
\par
 It follows that
$$\frac{ \int_{M}  |X_i|^2 dV_i}{V(g_i)} \int_{M} |s_i|^2 dV_i \leq  \int_{M}  |X_i|^2 |s_i|^2 dV_i  +
\frac{ C_1 |s_i|^2_{\infty}   }{\sqrt{i}} \int_{M} |X_i|^2 dV_i.$$

\end{proof}

Now we prove Lemma \ref{co}.
By Theorem \ref{good2} and Lemma \ref{good1}, \ref{good3}, we get

$$\frac{ \int_{M}  t_i^2 |X_i|^2 dV_i}{V(g_i)} \int_{M} |s_i|^2 dV_i \leq  \int_{M} t_i^2 |X_i|^2 |s_i|^2 dV_i  +
\frac{ C_1 |s_i|^2_{\infty} }{\sqrt{i}}  \int_{M} t_i^2 |X_i|^2 dV_i$$
$$\leq \frac{C(n)}{\sqrt{i}} |s_i|_{\infty} (\int_{M} t_i^2 |X_i|^2 dV_i )^{\frac{1}{2}} (\int_{M}|s_i|^2 dV_i)^{\frac{1}{2}}+
\frac{ C_1 |s_i|^2_{\infty}   }{\sqrt{i}} \int_{M} t_i^2 |X_i|^2 dV_i$$
$$\leq \frac{C_2}{\sqrt{i}}  (\frac{\int_{M} t_i^2 |X_i|^2 dV_i}{V(g_i)} )^{\frac{1}{2}} \int_{M}|s_i|^2 dV_i+
\frac{ C_2}{\sqrt{i}}  \frac{\int_{M} t_i^2 |X_i|^2 dV_i}{V(g_i)} \int_{M}|s_i|^2 dV_i, $$
where $C_1, C_2$ are positive constants depending on $n, p, \lambda_1, \lambda_2$.

As $t_i=(\frac{V(g_i)}{\int_{M} |X_i|^2 dV_i})^{1/2}$, we see
\begin{equation} \label{29}
\frac{\int_{M} t_i^2 |X_i|^2 dV_i}{V(g_i)}=1.
\end{equation}

Then we see that for sufficiently large $i$,
$$ \int_{M} |s_i|^2 dV_i \leq  \frac{1}{2}\int_{M} |s_i|^2 dV_i.$$

\section{Appendix: Basic facts of elliptic genera}
\label{basic ell}
In this section we recall some basic facts about elliptic genera. Elliptic genera were first constructed by Ochanine \cite{Och} and Landweber-Stong \cite{LS88} in a topological way.
Witten gave a geometric interpretation to elliptic genera by showing that formally they are indices of Dirac operators on free loop space \cite{W87, W}. The theory of elliptic genera gives a connection among the Atiyah-Singer index theory, Kac-Moody affine Lie algebra, modular forms and quantum field theory. The background and introduction of elliptic genera can be found in \cite{HBJ, Lan}.
\par
Let $M$ be a $4n$ dimensional compact oriented manifold and $\{\pm
2\pi \sqrt{-1}z_{j},1\leq j\leq 2n\}$ denote the formal Chern roots of $T_{%
\mathbb{C}}M $, the complexification of the tangent vector bundle $TM$.

Let
$$\hat A(TM)=\prod_{j=1}^{2n}\frac{\pi\sqrt{-1}z_j}{\sinh(\pi\sqrt{-1}z_j)}, \ \ \ \hat L(TM)=\prod_{j=1}^{2n}\frac{2\pi\sqrt{-1}z_j}{\tanh(\pi\sqrt{-1}z_j)}$$
be the Hirzebruch $\hat A$-class and $\hat L$-class of $M$ respectively.

Let $E$ be a complex vector bundle and $\mathrm{ch}(E)$ the Chern character of $E$. For any complex number $t$, let
$$\Lambda_t(E)={\mathbb C}|_M+tE+t^2\Lambda^2(E)+\cdots ,
\\\ S_t(E)={\mathbb C}|_M+tE+t^2S^2(E)+\cdots$$  denote
the total exterior and symmetric powers  of $E$ respectively, which live in
$K(M)[[t]]$ (page 117-119 in \cite{At2}). The following relations on these two operations hold,
\be \label{SW} S_t(E)=\frac{1}{\Lambda_{-t}(E)},\ \ \ \
\Lambda_t(E-F)=\frac{\Lambda_t(E)}{\Lambda_t(F)}.
\ee

Denote $\widetilde{E}=E-\C^{\mathrm{rk} E}$ in $K(M)$.

The elliptic genera of $M$ can be defined as (chap. 6 in \cite{HBJ} and \cite{Liu95cmp})
\begin{equation*}
Ell_1(M)=\left\langle \widehat{L}(TM)\mathrm{ch}\left( \Theta \left( T_{\mathbb{C%
}}M\right)\otimes  \Theta_1 \left( T_{\mathbb{C%
}}M\right) \right) ,[M]\right\rangle \in \Q[[q]],
\end{equation*}%

\begin{equation*}
Ell_2(M)=\left\langle \widehat{A}(TM)\mathrm{ch}\left( \Theta \left( T_{\mathbb{C%
}}M\right)\otimes  \Theta_2 \left( T_{\mathbb{C%
}}M\right) \right) ,[M]\right\rangle \in \Q[[q^{\frac{1}{2}}]] ,
\end{equation*}%
where
\be   \label{Wibundles}
\Theta (T_{\mathbb{C}}M)=\overset{\infty }{\underset{j=1}{\otimes }}%
S_{q^{j}}(\widetilde{T_{\mathbb{C}}M}),\ \ \ \Theta_1(T_{\mathbb{C}}M)=\bigotimes_{j=1}^\infty
\Lambda_{q^j}(\widetilde{T_\C M}), \ \ \ \Theta_2(T_{\mathbb{C}}M)=\bigotimes_{j=1}^\infty \Lambda_{-q^{j-{1\over
2}}}(\widetilde{T_{\mathbb C}M})
\ee
are the Witten bundles introduced in \cite{W}. One can expand these elements into Fourier series,
\be \Theta \left( T_{\mathbb{C%
}}M\right)\otimes  \Theta_1 \left( T_{\mathbb{C%
}}M\right)=A_0(T_\CC M)+A_1(T_\CC
M)q+\cdots=\CC+2(T_\CC M-\CC^{4n})q+\cdots,\ee
\be \label {expand B} \Theta \left( T_{\mathbb{C%
}}M\right)\otimes  \Theta_2 \left( T_{\mathbb{C%
}}M\right)=B_0(T_\CC M)+B_1(T_\CC M)q^{1\over2}+\cdots=\CC-(T_\CC M-\CC^{4n})q^{1\over 2}+\cdots.
\ee
Hence we have
\be  Ell_1(M)=\left\langle \widehat{L}(TM),[M]\right\rangle+2\left\langle \widehat{L}(TM)\mathrm{ch}\left(T_\CC M-\CC^{4n} \right) ,[M]\right\rangle q+\cdots,  \ee
\be \label{expand ell2} Ell_2(M)=\left\langle \widehat{A}(TM),[M]\right\rangle-\left\langle \widehat{A}(TM)\mathrm{ch}\left(T_\CC M-\CC^{4n} \right) ,[M]\right\rangle
q^{1\over 2}+\cdots\ee
and see that $Ell_1(M)$ is a $q$-deformation of $\sigma(M)$, the signature of $M$; and $Ell_2(M)$ is a $q$-deformation of $\hat A (M)$, the $\hat A$ genus of $M$.

By the Atiyah-Singer index theorem \cite{AS},
$Ell_1(M)$ can be expressed analytically as index of the twisted signature operator
\be \label{ell1ana} Ell_1(M)= \ind(d_s\otimes(\Theta \left( T_{\mathbb{C%
}}M\right)\otimes\Theta_1 \left( T_{\mathbb{C%
}}M\right)))\in \Z[[q]],\ee
where $d_s$ is the signature operator;
and when $M$ is spin,
$Ell_2(M)$ can be expressed analytically as index of the twisted Dirac operator,
\be \label{ell2ana} Ell_2(M)= \ind(D\otimes (\Theta \left( T_{\mathbb{C%
}}M\right)\otimes\Theta_2 \left( T_{\mathbb{C%
}}M\right)))\in \Z[[q^{1/2}]],\ee
where $D$ is the Atiyah-Singer spin Dirac operator on $M$ \cite{W}.

One of the important properties of elliptic genera is modularity. More precisely, we have the following two theorems.
\begin{thm} \label{mod 1}
 $Ell_1(M)$ and $Ell_2(M)$ are modularly related as
\be Ell_1(M, -1/\tau)={(2\tau)}^{2n}Ell_2(M, \tau). \ee
\end{thm}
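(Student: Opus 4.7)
My plan is to prove the identity by a direct manipulation of the Chern-roots formula for $Ell_1(M,-1/\tau)$, using the classical modular transformation laws of the Jacobi theta functions $\theta$ and $\theta_1$ under $S:\tau\mapsto -1/\tau$, together with a rescaling of the formal Chern roots by $\tau$. The reason this should work is that the $S$-transformation swaps $\theta_1 \leftrightarrow \theta_2$ at the appropriate argument and leaves the ratio structure $\theta'(0,\tau)/\theta(v,\tau)$ essentially invariant up to an exponential factor and a power of $\sqrt{-i\tau}$, so that after substitution the $Ell_1$ integrand morphs exactly into the $Ell_2$ integrand with rescaled Chern roots, and the rescaling is responsible for the overall factor $(2\tau)^{2n}$.

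First I would record the relevant transformation formulas. Substituting $v=z\tau$ in the standard identities $\theta(v/\tau,-1/\tau)=\tfrac{1}{i}\sqrt{-i\tau}\,e^{\pi i v^{2}/\tau}\theta(v,\tau)$ and $\theta_1(v/\tau,-1/\tau)=\sqrt{-i\tau}\,e^{\pi i v^{2}/\tau}\theta_2(v,\tau)$, and combining with the $v=0$ specializations $\theta'(0,-1/\tau)=(-i\tau)^{3/2}\theta'(0,\tau)$ and $\theta_1(0,-1/\tau)=\sqrt{-i\tau}\,\theta_2(0,\tau)$, a short computation yields, for each Chern-root variable $z$,
$$z\,\frac{\theta'(0,-1/\tau)}{\theta(z,-1/\tau)}\,\frac{\theta_1(z,-1/\tau)}{\theta_1(0,-1/\tau)}=u\,\frac{\theta'(0,\tau)}{\theta(u,\tau)}\,\frac{\theta_2(u,\tau)}{\theta_2(0,\tau)},\qquad u:=\tau z.$$
The key cancellations are that the exponential factor $e^{\pi i z^{2}\tau}$ appears once in the numerator (from $\theta_1$) and once in the denominator (from $\theta$), and that the remaining $i$'s and powers of $\sqrt{-i\tau}$ compose to a single extra factor of $\tau$, which is absorbed into $u=\tau z$.

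Taking the product over $j=1,\dots,2n$ and pairing with $[M]$, one obtains $Ell_1(M,-1/\tau)$ as $2^{2n}$ times the Chern-Weil evaluation of $\prod_j u_j\,\tfrac{\theta'(0,\tau)}{\theta(u_j,\tau)}\tfrac{\theta_2(u_j,\tau)}{\theta_2(0,\tau)}$ at $u_j=\tau z_j$. Since the integrand is a power series in the $u_j$'s, the pairing picks out its total-$u$-degree-$2n$ component (which corresponds to cohomological degree $4n$), and the rescaling $u_j=\tau z_j$ therefore contributes an overall factor $\tau^{2n}$. What remains is precisely the Chern-roots expression for $Ell_2(M,\tau)$, and multiplying by the $2^{2n}$ prefactor gives $Ell_1(M,-1/\tau)=(2\tau)^{2n}Ell_2(M,\tau)$.

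The only real obstacle in this argument is the careful bookkeeping of the $i$'s and $\sqrt{-i\tau}$'s in the theta transformation laws, in order to verify that each of the $2n$ Chern-root factors contributes exactly one net power of $\tau$ (and that no stray phase survives). Once this arithmetic is pinned down, the rest of the argument is a formal consequence of the structure of the Chern-Weil pairing and of the $S$-transformation's swap $\theta_1\leftrightarrow \theta_2$.
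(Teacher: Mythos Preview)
Your argument is correct and is precisely the standard theta-function transformation proof; the paper itself does not give a proof but simply cites \cite{HBJ} (pp.~119--120) and \cite{Liu95cmp}, and what you have written is exactly the computation one finds there. Your bookkeeping of the $\sqrt{-i\tau}$ and exponential factors is accurate, and the observation that the pairing with $[M]$ selects the total $z$-degree $2n$ part (hence the rescaling $u_j=\tau z_j$ yields the factor $\tau^{2n}$) is the right way to finish.
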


\begin{proof}
See page 119-120 in \cite{HBJ} and \cite{Liu95cmp}.
\end{proof}

\begin{thm} \label{mod 2}
(i) $\forall k\geq 0$,  the $B_k(T_\CC M)$ in the expansion (\ref{expand B}) is a virtual bundle, which is an integral linear combination of bundles of type
$$S^{i_1}(T_\CC M)\otimes \cdots \otimes S^{i_r}(T_\CC M)\otimes \Lambda^{j_1}(T_\CC M)\otimes \cdots \otimes \Lambda^{j_s}(T_\CC M),$$ who are subbundles of
tensor products of $T_\CC M$ of power at most $k$; \newline
(ii) $Ell_2(M)$ is determined by $ \ind (D\otimes B_k(T_\C M)), \, 0 \leq k \leq \left[\frac{n}{2}\right].$
\end{thm}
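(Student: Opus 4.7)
The plan has two parts corresponding to the two claims. For (i), I would pass from $\widetilde{T_\CC M} = T_\CC M - \CC^{4n}$ to $T_\CC M$ using the identities (\ref{SW}) together with $S_t(\CC) = 1/(1-t)$ and $\Lambda_t(\CC) = 1 + t$, which yield
$$S_{q^j}(\widetilde{T_\CC M}) = (1-q^j)^{4n}\, S_{q^j}(T_\CC M), \qquad \Lambda_{-q^{j-1/2}}(\widetilde{T_\CC M}) = (1-q^{j-1/2})^{-4n}\, \Lambda_{-q^{j-1/2}}(T_\CC M).$$
Consequently
$$\Theta(T_\CC M) \otimes \Theta_2(T_\CC M) \;=\; F(q) \cdot \prod_{j \ge 1} \bigl(S_{q^j}(T_\CC M) \otimes \Lambda_{-q^{j-1/2}}(T_\CC M)\bigr),$$
where $F(q) = \prod_{j \ge 1} (1-q^j)^{4n}(1-q^{j-1/2})^{-4n} \in \Z[[q^{1/2}]]$ is a purely scalar series with constant term $1$. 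Expanding the bundle product, a typical contribution to $q^{k/2}$ is a tensor product
$$\Bigl(\bigotimes_{j \ge 1} S^{a_j}(T_\CC M)\Bigr) \otimes \Bigl(\bigotimes_{j \ge 1} \Lambda^{b_j}(T_\CC M)\Bigr)$$
with nonnegative integer exponents $(a_j),(b_j)$ satisfying $\sum_j 2 j\, a_j + \sum_j (2j-1)\, b_j = k$. Since $2j \ge 1$ and $2j-1 \ge 1$ for all $j \ge 1$, the total tensor power $\sum_j (a_j + b_j)$ is at most $k$; each symmetric or exterior power is a canonical subbundle of the corresponding tensor power, and multiplication by $F(q)$ only reshuffles integer coefficients. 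This establishes (i).

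For (ii), I would invoke the modularity of $Ell_2(M)$: by the theta-function expression recorded in the Introduction (see also \cite{Liu95cmp}), $Ell_2(M)$ is a modular form of weight $2n$ over $\Gamma^0(2)$. The graded ring of modular forms for $\Gamma^0(2)$ is polynomial in two generators of weights $2$ and $4$, so the weight-$2n$ subspace $M_{2n}(\Gamma^0(2))$ has dimension $\lfloor n/2 \rfloor + 1$. On the other hand, linearity of the twisted Dirac index applied to the Fourier expansion $\Theta(T_\CC M) \otimes \Theta_2(T_\CC M) = \sum_{k \ge 0} B_k(T_\CC M)\, q^{k/2}$ gives
$$Ell_2(M) \;=\; \sum_{k \ge 0} \ind\bigl(D \otimes B_k(T_\CC M)\bigr)\, q^{k/2}.$$
By the $q$-expansion principle, the linear map sending a modular form to its first $\lfloor n/2 \rfloor + 1$ Fourier coefficients is injective on $M_{2n}(\Gamma^0(2))$, and hence $Ell_2(M)$ is completely determined by $\ind(D \otimes B_k(T_\CC M))$ for $0 \le k \le \lfloor n/2 \rfloor$.

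The main obstacle is the careful bookkeeping in (i): one must track the four factors $(1 \pm q^{\bullet})^{\pm 4n}$ through $S_t$ and $\Lambda_t$, verify they combine into a single scalar series in $\Z[[q^{1/2}]]$, and confirm that the \emph{tensor power at most $k$} bound survives the final multiplication by $F(q)$. A secondary subtlety in (ii) is a convention issue: since the expansion of $Ell_2(M)$ uses $q^{1/2}$ rather than $q$, one should verify that the cited dimension $\lfloor n/2 \rfloor + 1$ of $M_{2n}(\Gamma^0(2))$ is counted with the correct cusp-width and multiplier so that the $q$-expansion principle applies verbatim; this is a standard check once the modular variable is fixed.
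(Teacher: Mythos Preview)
Your proposal is correct and follows essentially the same line as the paper. For (i) the paper only gestures at (\ref{SW}), (\ref{Wibundles}), (\ref{expand B}), whereas you actually carry out the factorisation into a scalar series $F(q)$ times a product of $S_{q^j}(T_\CC M)$ and $\Lambda_{-q^{j-1/2}}(T_\CC M)$; your degree bookkeeping is sound. For (ii) both arguments rest on the modularity of $Ell_2(M)$ over $\Gamma^0(2)$ and on $\dim M_{2n}(\Gamma^0(2))=[n/2]+1$. The only difference is that the paper makes the injectivity of the ``first $[n/2]+1$ coefficients'' map completely explicit: it writes $Ell_2(M)$ in the basis $(8\delta_2)^{n-2j}\varepsilon_2^{\,j}$, notes that $8\delta_2$ has constant term $-1$ while $\varepsilon_2$ starts at $q^{1/2}$, so the change-of-basis matrix between $\{h_j\}$ and $\{\ind(D\otimes B_j)\}$ is upper triangular with units on the diagonal. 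Your appeal to a Sturm-type bound is the abstract version of the same computation; just be aware that the name ``$q$-expansion principle'' usually refers to something else, and the cleanest verification here is exactly the triangularity the paper records.
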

\begin{proof} The first statement can be simply observed from (\ref{SW}), (\ref{Wibundles})  and (\ref{expand B}).

The proof of second statement can be found in Section 8.2 in \cite{HBJ} and \cite{Liu95cmp}.
We recap here to show how the elliptic genus is determined by $B_k(T_\CC M)$ more explicitly.

Let $$ SL_2(\Z):= \left\{\left.\left(\begin{array}{cc}
                                      a&b\\
                                      c&d
                                     \end{array}\right)\right|a,b,c,d\in\Z,\ ad-bc=1
                                     \right\}
                                     $$
 as usual be the famous modular group. Let
$$S=\left(\begin{array}{cc}
      0&-1\\
      1&0
\end{array}\right), \ \ \  T=\left(\begin{array}{cc}
      1&1\\
      0&1
\end{array}\right)$$
be the two generators of $ SL_2(\Z)$. Their actions on
$\mathbf{H}$ are given by
$$ S:\tau\rightarrow-\frac{1}{\tau}, \ \ \ T:\tau\rightarrow\tau+1.$$

Let
$$ \Gamma_0(2)=\left\{\left.\left(\begin{array}{cc}
a&b\\
c&d
\end{array}\right)\in SL_2(\Z)\right|c\equiv0\ \ (\rm mod \ \ 2)\right\},$$

$$ \Gamma^0(2)=\left\{\left.\left(\begin{array}{cc}
a&b\\
c&d
\end{array}\right)\in SL_2(\Z)\right|b\equiv0\ \ (\rm mod \ \ 2)\right\}$$
be the two modular subgroup of $SL_2(\Z)$. It is known
that the generators of $\Gamma_0(2)$ are $T,ST^2ST$, while the
generators of $\Gamma^0(2)$ are $STS,T^2STS$ \cite{Cha}. It can be shown that $Ell_1(M)$ is a modular form of weight $2n$ over $\Gamma_0(2)$ and $Ell_2(M)$ is a modular
form of weight $2n$ over $\Gamma^0(2)$  \cite{Liu95cmp}.

If $\Gamma$ is a modular subgroup, let
$\mathcal{M}_\R(\Gamma)$ denote the ring of modular
forms over $\Gamma$ with real Fourier coefficients.  We introduce four
explicit modular forms (page 119 in \cite{HBJ}),
$$ \delta_1(\tau)=\frac{1}{4}+6\sum_{n=1}^{\infty}\underset{d\ odd}{\underset{d|n}{\sum}}dq^n, \ \ \ \
\varepsilon_1(\tau)=\frac{1}{16}+\sum_{n=1}^{\infty}\underset{d|n}{\sum}(-1)^dd^3q^n\ ,$$
$$\delta_2(\tau)=-\frac{1}{8}-3\sum_{n=1}^{\infty}\underset{d\ odd}{\underset{d|n}{\sum}}dq^{n/2}, \ \ \ \
\varepsilon_2(\tau)=\sum_{n=1}^{\infty}\underset{n/d\ odd}{\underset{d|n}{\sum}}d^3q^{n/2}\ .$$ They have
the following Fourier expansions in $q^{1/2}$:
$$\delta_1(\tau)={1\over 4}+6q+6q^2+\cdots,\ \ \ \ \varepsilon_1(\tau)={1\over
16}-q+7q^2+\cdots\ , $$
$$\delta_2(\tau)=-{1\over 8}-3q^{1/2}-3q+\cdots,\ \ \ \
\varepsilon_2(\tau)=q^{1/2}+8q+\cdots\ .$$ where the
\textquotedblleft $\cdots$" terms are the higher degree terms, all
of which have integral coefficients. They also satisfy the
transformation laws, \be
\delta_2\left(-\frac{1}{\tau}\right)=\tau^2\delta_1(\tau),\ \ \ \ \
\ \ \ \ \
\varepsilon_2\left(-\frac{1}{\tau}\right)=\tau^4\varepsilon_1(\tau).\ee
One has that $\delta_1(\tau)\ (resp.\ \varepsilon_1(\tau) ) $
is a modular form of weight $2 \ (resp.\ 4)$ over $\Gamma_0(2)$,
while $\delta_2(\tau) \ (resp.\ \varepsilon_2(\tau))$ is a modular
form of weight $2\ (resp.\ 4)$ over $\Gamma^0(2)$, and moreover
$\mathcal{M}_\R(\Gamma^0(2))=\R[\delta_2(\tau),
\varepsilon_2(\tau)]$.

Therefore one can express $Ell_2(M)$ in terms of $8\delta_2(\tau)$ and $\varepsilon_2(\tau)$ as
\be \label{baseexpand}
Ell_2(M)
 =h_0(8\delta_2(\tau))^n+h_1(8\delta_2(\tau))^{n-2}\varepsilon_2(\tau)
+\cdots+h_{[\frac{n}{2}]}(8\delta_2(\tau))^{\bar n}\varepsilon_2(\tau)^{[\frac{n}{2}]} ,\ee where
$\bar n=0$ if $n$ is even and $\bar n=1$ if $n$ is odd, and each $h_r$, $0\leq r\leq [\frac{n}{2}] $, is an integer.
They are all indices of certain twisted Dirac operators on
$M$. Write $\Theta \left( T_{\mathbb{C%
}}M\right)\otimes  \Theta_2 \left( T_{\mathbb{C%
}}M\right)$ as
\be \Theta \left( T_{\mathbb{C%
}}M\right)\otimes  \Theta_2 \left( T_{\mathbb{C%
}}M\right)=B_0(T_\C M)+B_1(T_\C
M)q^{1\over2}+\cdots. \ee The $B_i$'s carry canonically induced
Hermitian metrics and connections from the Riemannian metric and Levi-Civita connection on $TM$. Then
\be \label{qexpand} Ell_2(M)=\ind (D\otimes B_0(T_\C M))+\ind(D\otimes B_1(T_\C
M))q^{1\over2}+\cdots. \ee

Comparing the $q$-coefficients in (\ref{baseexpand}) and (\ref{qexpand}) and noticing that that $8\delta_2(\tau)$ starts from $-1$, one sees that each $h_r$ is a canonical
linear combination of $\ind(D\otimes B_j(T_\C M)), 0\leq j \leq r.$ So
 we see that $Ell_2(M)$ is determined by $B_{k}(T_\C M), 0\leq k\leq [\frac{n}{2}].$
 \end{proof}

The following Corollary is an easy consequence of Theorem \ref{mod 1} and \ref{mod 2}.
\begin{cor}
If $Ell_1(M) \neq 0$ or $\ Ell_2(M) \neq 0$, then $ \ind (D\otimes B_k(T_\C M)) \neq 0$ for some $k \leq \left[\frac{n}{2}\right]$.
\end{cor}

Anther important property of elliptic genus is rigidity.
\begin{thm}[\protect Witten-Bott-Taubes-Liu Rigidity] \label{rig} The Witten operators
$$d_s\otimes(\Theta \left( T_{\mathbb{C%
}}M\right)\otimes\Theta_1 \left( T_{\mathbb{C%
}}M\right)), \ \  D\otimes (\Theta \left( T_{\mathbb{C%
}}M\right)\otimes\Theta_2 \left( T_{\mathbb{C%
}}M\right))$$ are rigid.
\end{thm}
\begin{proof}
See \cite{ BT, Liu95, Liu96, Tau}.
\end{proof}

Recently Vernge \cite{Ver} reproved Witten rigidity by developing the theory of  elliptic bouquet.

\par
Our Theorem \ref{mm2} gives a relationship between curvature and elliptic genera. The following example shows that on a closed spin Riemannian manifold, without the
curvature assumptions in Theorem \ref{mm2}, even if the isometry group is infinite, the elliptic genera do not necessarily vanish.
\begin{exm} \label{va}
Let $M=M(5; 2)$ be a smooth quadric hypersurface in $\mathbb{CP}^5$. This is a 8 dimensional closed spin manifold carrying the linear $SO(6)$ action and therefore a
nontrivial $S^1$-action, preserving the K\"ahler metric on $M$ induced by the embedding $ M \subset \C P^5$. Then $\hat A(M)=0$
by the famous Atiyah-Hirzebruch vanishing theorem \cite{AH70}. We will show that $\int_M \hat
A(TM)\mathrm{ch}(T_{\mathbb{C}} M)\neq 0$, which implies $Ell_2(M)\neq 0$ by (\ref{expand ell2}).  Actually by the 8 dimensional miraculous cancellation formula
(\cite{Liu95cmp}), one has
$$\sigma(M)=24\hat A(M)-\int_M \hat A(TM)\mathrm{ch} (T_{\mathbb{C}} M),$$
where $\sigma(M)$ is the signature. Since $\hat A(M)=0$, we just need to show that $\sigma(M)\neq 0$. Let $x\in H^2(\mathbb{CP}^5, \mathbb{Z})$ be the generator. Then by the
Hirzebruch signature theorem and Poincar\'e duality, one sees that
$$\sigma(M)=\left\langle \left(\frac{x}{\tan x} \right)^6 \tan(2x), [\mathbb{CP}^5]\right\rangle=\mathrm{Res}_{x=0} \left(\frac{\tan 2x}{(\tan x)^6}\right)=2. $$
\end{exm}

Closely related to the elliptic genera is the Witten genus
\begin{equation*}
W(M)=\left\langle \widehat{A}(TM)\mathrm{ch}\left( \Theta \left( T_{\mathbb{C%
}}M\right) \right) ,[M]\right\rangle \in \Q[[q]].
\end{equation*}
When $M$ is spin,
$$W(M)=\ind(D\otimes \Theta \left( T_{\mathbb{C%
}}M\right))\in \mathbb{Z}[[q]].$$

The Witten genus is conjectured to be an obstruction to positive Ricci curvature on string manifolds. More precisely, the famous Stolz conjecture \cite{St} says that if $M$ is a smooth closed string manifold of dimension $4n$ and  admits a Riemannian metric with positive Ricci curvature, then the Witten genus $W(M)$ vanishes. This conjecture can be viewed as the higher version of the classical Lichnerowicz theorem \cite{Li}. So far the Stolz conjecture is still open.

The Witten genus is also an obstruction to simply connected Lie group actions on string manifolds. Actually it has been shown that a string manifold with a nontrivial $S^3$-action has vanishing Witten genus \cite{Des94, Liu95}. Nevertheless, on spin manifolds, the Witten genus is not rigid.

\end{document}